\documentclass[english,reqno]{amsart}
\usepackage[T1]{fontenc}
\usepackage[latin9]{inputenc}
\usepackage{bbm}
\usepackage{geometry}
\geometry{tmargin=1in,bmargin=1in,lmargin=1in,rmargin=1in}
\usepackage{amstext}
\usepackage{amsthm}
\usepackage{amssymb}
\usepackage{mathtools, color,hyperref}

\numberwithin{equation}{section}
\numberwithin{figure}{section}
\theoremstyle{plain}
\newtheorem{thm}{\protect\theoremname}
  \theoremstyle{plain}
  \newtheorem{lem}[thm]{\protect\lemmaname}

\allowdisplaybreaks


\usepackage{babel}
\providecommand{\lemmaname}{Lemma}
\providecommand{\theoremname}{Theorem}
\newcommand{\x}{\mathbf{x}}
\newcommand{\y}{\mathbf{y}}

\newtheorem{remark}{Remark}
\newtheorem{corollary}{Corollary}

\begin{document}

\title{Lower Lipschitz Bounds for Phase Retrieval from Locally Supported Measurements}\thanks{M.A. Iwen:  Department of Mathematics and Department of CMSE, Michigan State University ({\tt markiwen@math.msu.edu}).  M.A. Iwen was supported in part by NSF DMS-1416752. \\ 
\indent Sami E. Merhi: Department of Mathematics, Michigan State University ({\tt merhisam@msu.edu}) \\ 
\indent Michael Perlmutter: Department of Computational Mathematics Science and Engineering (CMSE), Michigan State University ({\tt perlmut6@msu.edu}).}

\author{Mark A. Iwen, Sami Merhi, Michael Perlmutter}


\begin{abstract}
In this short note, we consider the worst case noise robustness of any phase retrieval algorithm which aims to reconstruct all nonvanishing vectors $\mathbf{x} \in \mathbb{C}^d$ (up to a single global phase multiple) from the magnitudes of an arbitrary collection of local correlation measurements. Examples of such measurements include both spectrogram measurements of $\mathbf{x}$ using locally supported windows and masked Fourier transform intensity measurements of $\mathbf{x}$ using bandlimited masks.  As a result, the robustness results considered herein apply to a wide range of both ptychographic and Fourier ptychographic imaging scenarios.  In particular, the main results imply that the accurate recovery of  high-resolution images of extremely large samples using highly localized probes is likely to require an extremely large number of measurements in order to be robust to worst case measurement noise, independent of the recovery algorithm employed.  Furthermore, recent pushes to achieve high-speed and high-resolution ptychographic imaging of integrated circuits for process verification and failure analysis will likely need to carefully balance probe design (e.g., their effective time-frequency support) against the total number of measurements acquired in order for their imaging techniques to be stable to measurement noise, no matter what reconstruction algorithms are applied.
\end{abstract}

\maketitle

\section{Introduction and Statement of Results}

We consider the robustness of the {\it finite-dimensional phase retrieval problem} in which one attempts to recover a signal $\mathbf{x}\coloneqq\left(\mathbf{x}(1),\ldots,\mathbf{x}(d)\right)^T\in\mathbb{C}^d$ from one of two nonlinear measurement maps $\alpha, \beta:\mathbb{C}^d\rightarrow \mathbb{R}^N$ given by
\begin{equation*}
\alpha(\mathbf{x}) = \{|\langle \mathbf{x}, \mathbf{f_k}\rangle|\}_{k=1}^N \text{ and } \beta(\mathbf{x}) = \{|\langle \mathbf{x},\mathbf{f_k}\rangle|^2\}_{k=1}^N,
\end{equation*}  
where the vectors $\{\mathbf{f_1},\ldots, \mathbf{f_N}\} \subset \mathbb{C}^d$ form a frame (i.e., a spanning set) of $\mathbb{C}^d$.  This problem is motivated by inverse problems that arise in several scientific areas including optics \cite{Walther1963}, astronomy \cite{ Fienup1987}, quantum mechanics \cite{Corbett2006}, and audio signal processing \cite{nawab1983signal,sturmel2011signal}.  In particular, we will focus on a special class of frame vectors $\mathbf{f_k}$ which have {\it localized support} (i.e., all of whose nonzero entries are contained in an interval of length at most $\delta \ll d$).  Such frames are commonly encountered in applications like ptychographic imaging in which small overlapping regions of a much larger specimen are illuminated one at a time, and a detector captures the intensities of the resulting local diffraction patterns \cite{Rodenburg2008}.

It is clear that for any $\theta\in\mathbb{R}$ one has $\alpha(\mathbbm{e}^{i\theta}\mathbf{x})=\alpha(\mathbf{x})$ and $\beta(\mathbbm{e}^{i\theta}\mathbf{x})=\beta(\mathbf{x}).$ Therefore, we can at best hope to recover $\mathbf{x}$ up to the equivalence relation $\mathbf{x}\sim \mathbf{x}',$ if  $\mathbf{x}=\mathbbm{e}^{i\theta}\mathbf{x}'$ for some $\theta\in\mathbb{R}.$ Following the work of Balan et al. \cite{balan2016frames,Balan2016}, we will consider two commonly used metrics on $\mathbb{C}^d/\sim:$ the natural metric 
\begin{equation*}
D_2(\mathbf{x},\mathbf{x}') \coloneqq \min_{\theta\in\mathbb{R}} \|\mathbf{x}-\mathbbm{e}^{\mathbbm{i}\theta}\mathbf{x}'\|_2,
\end{equation*}
and the matrix-norm induced metric 
\begin{equation*}
d_1(\mathbf{x},\mathbf{x}') \coloneqq \|\mathbf{x}\mathbf{x}^*-\mathbf{x}'\mathbf{x}'^*\|_1 \coloneqq \sum_k \sigma_k(\mathbf{x}\mathbf{x}^*-\mathbf{x}'\mathbf{x}'^*),
\end{equation*}
where  $\sigma_k(\mathbf{x}\mathbf{x}^*-\mathbf{x}'\mathbf{x}'^*)$ is the $k$-th singular value of the (at most rank-two) matrix $\mathbf{x}\mathbf{x}^*-\mathbf{x}'\mathbf{x}'^*$.  In \cite{Balan2016}, Balan et al. showed that if $\alpha$ and $\beta$ are injective on $\mathbb{C}^d/\sim,$ then $\beta$ is bi-Lipschitz with respect to $d_1,$ and $\alpha$ is bi-Lipschitz with respect to $D_2,$ where in both cases $\mathbb{R}^N$ is equipped with the Euclidean norm.

Motivated by applications such as (Fourier) ptychography \cite{Rodenburg2008,zheng2013wide} and related numerical methods \cite{iwen2016fpr,robustPR}, we will study frames which are constructed as the shifts of a family of locally supported measurement vectors. Specifically, we assume that $\{\mathbf{m_1},\mathbf{m_2},\ldots,\mathbf{m_K}\}$ is a family of measurement masks in $\mathbb{C}^d$ such that for all $1\leq k \leq K$ the nonzero entries of $\mathbf{m_k}$ are contained in the set $[\delta] := \{ 1, \dots, \delta \}$ for some $\delta\leq \frac{d}{4}$ (although all of our results remain valid if the support of our masks are contained in any interval of length $\delta$).  Letting $L$ be an integer which divides $d$, such that $a\coloneqq\frac{d}{L}<\delta,$ we consider nonlinear phaseless measurement maps $Y,Z: \mathbb{C}^d\rightarrow \mathbb{R}^{K\times L}$ defined by their coordinate functions
\begin{equation}
Y_{k,\ell}(\mathbf{x}) \coloneqq |\langle S_{\ell a}\mathbf{m_k}, \mathbf{x}\rangle|^2
\label{equ:Ymap}
\end{equation} 
and
\begin{equation}
Z_{k,\ell}(\mathbf{x})\coloneqq|\langle S_{\ell a}\mathbf{m_k}, \mathbf{x}\rangle|
\label{equ:Zmap}
\end{equation}
for $1\leq k \leq K$ and $1\leq \ell \leq L$.
Here $S_\ell$ is  the circular shift operator on $\mathbb{C}^d$ defined for all $\ell\in\mathbb{Z}$ by
\begin{equation*}
(S_\ell\mathbf{x})(n) \coloneqq \x\left((n+\ell-1) \!\!\!\!\mod d+1\right).
\end{equation*} 
(The $+\:1$ is needed because we are indexing our vectors from one.) 
 For notational convenience, we will assume that $d$ is even, although our results remain valid, with similar proofs, when $d$ is odd. 

The purpose of this paper is to provide lower bounds on the Lipschitz constants of any maps, $A$ and $B,$ which reconstruct $\mathbf{x}$ from $Y$ and $Z$, respectively. 
With such lower bounds in hand, one would be better equipped to, e.g., judge the optimality of theoretical noisy reconstruction guarantees for phase retrieval algorithms which utilize locally supported measurements (see, e.g., \cite{iwen2016fpr,robustPR}).
Unfortunately, $Y$ and $Z$ are not injective on all of $\mathbb{C}^d/\sim$.  For example, if two vectors $\mathbf{x}^{\pm} \in \mathbb{C}^d$ are defined by 
\begin{equation}
\x^{\pm}(n) \coloneqq \begin{cases} 
      1, & 1\leq n \leq \frac{d}{2}-\delta \\
      0, & \frac{d}{2}-\delta < n \leq \frac{d}{2} \\
      \pm 1,& \frac{d}{2}<n\leq d-\delta \\
	  0, & d-\delta<n\leq d
   \end{cases},
   \label{equ:AtollVec}
\end{equation} 
then $\mathbf{x^+}\not\sim\mathbf{x^-},$ but $Y(\mathbf{x}^+)=Y(\mathbf{x}^-)$ and $Z(\mathbf{x}^+)=Z(\mathbf{x}^-).$ (If $d$ were odd, we could add an extra entry of $1$ to  $\mathbf{x}^\pm.$) However, it can be shown \cite{iwen2016fpr} that $Y$ and $Z$ are injective when restricted to the subset of $\mathbb{C}^d$ such that $\x(n)\neq 0$ for all $1\leq n \leq d$, for certain choices of masks in the case where $L=d$.  Given this, we will consider the maps $Y$ and $Z$  restricted to the subset 
\begin{equation*}
\mathcal{C}_{p,q}=\{\mathbf{x}\in\mathbb{C}^d/\sim\text{ such that } p\leq |\x(n)| \leq q \text{ for all }1\leq n \leq d\},
\end{equation*}
 for some fixed $0<p\leq q,$  and provide lower bounds on the Lipschitz constants of $A$ and $B$ which grow linearly with respect to the ratio $\frac{q}{p}.$ 


\subsection{Related Work and Implications}


Our local measurement maps \eqref{equ:Ymap} and \eqref{equ:Zmap} are closely related to several practical measurement models that have been explored in the phase retrieval literature including, for example, Short-Time Fourier Transform (STFT) magnitude measurements (see, e.g., \cite{bendory2018non,jaganathan2016stft,pfander2016robust,salanevich2015polarization}).  In particular, suppose that our STFT magnitude measurements are generated by a compactly supported window $\mathbf{w} \in \mathbb{C}^d$ whose $n^{\rm th}$-entry $\mathbf{w}(n)$ is nonzero only if $n \in [\delta]$.  In this setting, we can use one locally supported mask $\mathbf{m_k}$ to represent each measured frequency $\omega_k \in \Omega \subset [d] := \{ 1, \dots, d \}$ by letting $\mathbf{m_k} := W_{\omega_k} \mathbf{w}$ for each frequency index $k$, where $W_{\omega_k}$ is the modulation operator defined on $\mathbb{C}^d$ by 
\begin{equation*}
(W_{\omega_k} \mathbf{w})(n) :=  \mathbbm{e}^{\frac{2 \pi \mathbbm{i} (n-1) (\omega_k - 1)}{d}}\mathbf{w}(n).\end{equation*}  In this case, we have
$$\left| \left\langle S_{\ell a}\mathbf{m_k}, \mathbf{x} \right \rangle \right| ~=~ \left| \left\langle \mathbf{x}, S_{\ell a} W_{\omega_k} \mathbf{w} \right\rangle \right| ~=~ \left| \left\langle \mathbf{x}, \mathbbm{e}^{\frac{2 \pi \mathbbm{i} \ell a (\omega_k-1)}{d}} W_{\omega_k} S_{\ell a} \mathbf{w} \right\rangle \right|~=~ \left| \left\langle \mathbf{x}, W_{\omega_k} S_{\ell a} \mathbf{w} \right\rangle \right|$$
for all $k$ and $\ell.$
Therefore, one can see that the main results below yield lower Lipschitz bounds for any such STFT magnitude measurements in terms of the total number of shifts $L$, the number of measured frequencies $K$, and the window $\mathbf{w}$'s support size $\delta.$ 

Another common model considered in the phase retrieval literature concerns masked Fourier measurements of the form 
\begin{equation}
|F ~{\rm Diag}(\mathbf{w_k}) ~\mathbf{x} |^2,
\label{equ:MaskedFouerierMeas}
\end{equation}
where $F$ is the $d \times d$ discrete Fourier transform matrix whose entries are defined by 
\begin{equation*}
F_{j,k} := 
\mathbbm{e}^{-2\pi\mathbbm{i}\frac{(j-1)(k-1)}{d}},
\end{equation*} 
and  $\{\mathbf{w_1},\ldots,\mathbf{w_k}\} \subset \mathbb{C}^d$ is a family of measurement vectors (see, e.g., \cite{bandeira2014phase,candes2015phase,Candes2014WF,gross2017improved}).  In this setting one can ask what effect, if any, requiring each $\mathbf{w_k}$ to be bandlimited (i.e., to have support size $\delta \ll d$ in the Fourier basis) might have on the stability of these measurements.  Furthermore, one might also consider subsampling each of the masked Fourier measurements in frequency instead of acquiring measurements for all $d$ frequencies. (This may even be a necessity due to, for example, detector limitations.)  We will show that our results may also be applied to these types of measurements as a special case.

Suppose for example that each measurement vector $\mathbf{w_k}$ has $\widehat{\mathbf{w_k}}(n)\coloneqq (F\mathbf{w_k})(n) = 0$ for all $n \notin \{ 1 \} \cup \{ d - \delta + 2, \dots, d \}$.\footnote{Note that this particular support interval (modulo d) is not particularly special.  The same arguments below can be extended to apply to any interval of support of size $\leq \delta$ in a straightforward fashion.} For a vector $\mathbf{u} \in \mathbb{C}^d$, let $\tilde{\mathbf u} \in \mathbb{C}^d$ be the vector obtained by reflecting the entries of $\mathbf{u}$ about its first entry so that
\begin{equation*}
\tilde{\mathbf u}(n) := \mathbf{u}\left((1-n)\!\!\!\!\mod d+1\right).
\end{equation*}
In this case, we see that the measurements \eqref{equ:MaskedFouerierMeas} are given by the measurement map \eqref{equ:Ymap} applied to $\widehat{{\bf x}}$ with the locally supported measurement masks $\mathbf{m_k} := \frac{1}{d} \overline{\widetilde{\widehat{{\bf w_{k}}}}}$.  
Indeed, 
\begin{align}
\left| \left\langle S_{\ell a}\mathbf{m_k}, \widehat{{\bf x}} \right \rangle \right|  &=  \frac{1}{d}\left|\left\langle \widehat{{\bf x}},S_{\ell a}\overline{\widetilde{\widehat{{\bf w_{k}}}}}\right\rangle \right| =  \frac{1}{d}\left|\sum_{n=1}^{d}\widehat{{\bf x}}\left(n\right)S_{\ell a}\widetilde{\widehat{{\bf w_{k}}}}\left(n\right)\right| \nonumber\\
&=  \frac{1}{d}\left|\sum_{n=1}^{d}\widehat{{\bf x}}\left(n\right)\widehat{{\bf w_{k}}}\left(\left(1-\ell a-n \right) \!\!\!\!\mod d+1\right)\right| \nonumber\\
& =  \frac{1}{d}\left|\left(\widehat{{\bf w_{k}}} * \widehat{{\bf x}}\right)\left(-\ell a\!\!\!\!\mod d + 1\right)\right|, \label{eq:MaskedFourierCalc}
\end{align}
where $*$ is circular convolution given by
\begin{equation*}
(\x * \y)(m) = \sum_{n=1}^d \x(n)\y((m-n)\!\!\!\!\mod d+1).
\end{equation*}
  Continuing from \eqref{eq:MaskedFourierCalc}, we see by the convolution theorem
$$\left| \left\langle S_{\ell a}\mathbf{m_k}, \widehat{{\bf x}} \right \rangle \right|~=~ \left|F\left({\bf w_{k}}\circ{\bf x}\right)\left((-\ell a\!\!\!\!\mod d) + 1\right)\right| ~=~ \left|F\left(\mbox{Diag}\left({\bf w_{k}}\right){\bf x}\right)\left((-\ell a\!\!\!\!\mod d) + 1\right)\right|,$$
where $\circ$ represents the Hadamard (componentwise) product.

As a result, we see that recovering a vector ${\bf x}$ from masked Fourier measurements of the form \eqref{equ:MaskedFouerierMeas} with bandlimited measurement vectors $\mathbf{w_k}$ is equivalent to recovering $\widehat{{\bf x}}$ from measurements \eqref{equ:Ymap} with locally supported measurement masks $\mathbf{m_k}$.  Therefore, the main results below also yield lower Lipschitz bounds for any such masked Fourier magnitude measurements in terms of the total number of frequencies $L$ collected per measurement vector, the total number $K$ of measurement vectors used, and the maximum Fourier support size $\delta$ of each bandlimited measurement vector.

\subsection{Main Results}

The main results of this paper are the following two theorems which provide lower bounds for the Lipschitz constants of any maps $A$ and $B$ for which $A(Y(\mathbf{x}))=\mathbf{x}$ and $B(Z(\mathbf{x}))=\mathbf{x}$ for all $\mathbf{x} \in \mathcal{C}_{p,q}$. 
\begin{thm}\label{maintheorem2}
Let $0<p\leq q,$ and consider the map $Z$ restricted to the subset $\mathcal{C}_{p,q}\subset \mathbb{C}^{d}/\sim.$ 
Assume that $\delta\leq\frac{d}{4}$ and that $d=aL$ for some integer $1 \leq a<\delta.$ Then if $B$ is any Lipschitz map (with respect to $D_2$)  such that $B(Z(\mathbf{x}))=\mathbf{x}$ for all $\mathbf{x} \in \mathcal{C}_{p,q},$ we have that 
 \begin{equation}\label{Zclaim}
 C_B \geq C\frac{q\sqrt{da}}{p\sqrt{K}\|\mathbf{m}\|_\infty\delta^{3/2}}= C\frac{qd}{p\sqrt{KL}\|\mathbf{m}\|_\infty\delta^{3/2}} ,
\end{equation}
where $C_B$ is the Lipschitz constant of $B,$ $\|\mathbf{m}\|_\infty \coloneqq \max_{1\leq k \leq K}\|\mathbf{m_k}\|_\infty,$ and $C$ is a universal constant.
\end{thm}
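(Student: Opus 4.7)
The approach is the familiar one: exhibit two signals $\mathbf{x}^+,\mathbf{x}^- \in \mathcal{C}_{p,q}$ that are far apart in $D_2$ but whose $Z$-measurements are nearly identical, and invoke
\[
D_2(\mathbf{x}^+, \mathbf{x}^-) \;=\; D_2\!\bigl(B(Z(\mathbf{x}^+)),\,B(Z(\mathbf{x}^-))\bigr) \;\leq\; C_B\,\|Z(\mathbf{x}^+) - Z(\mathbf{x}^-)\|_2
\]
to force $C_B$ to be large. The natural candidates are perturbations of the atoll vectors in \eqref{equ:AtollVec} that live in $\mathcal{C}_{p,q}$: assign modulus $q$ to each of the two ``large'' blocks $[1,\tfrac{d}{2}-\delta]$ and $(\tfrac{d}{2},d-\delta]$, flipping sign on the second block between $\mathbf{x}^+$ and $\mathbf{x}^-$, and modulus $p$ on the two length-$\delta$ ``gaps'' $(\tfrac{d}{2}-\delta,\tfrac{d}{2}]$ and $(d-\delta,d]$. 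A direct computation (the minimum over $\theta$ occurs at $\theta = 0$ since $\langle\mathbf{x}^+,\mathbf{x}^-\rangle = 2p^2\delta > 0$) yields $D_2(\mathbf{x}^+, \mathbf{x}^-) = 2q\sqrt{d/2 - \delta} \geq q\sqrt{d}$, where the last inequality uses $\delta \leq d/4$.

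The heart of the proof is an upper bound on $\|Z(\mathbf{x}^+) - Z(\mathbf{x}^-)\|_2$. Set $v_{k,\ell}^{\pm} := \langle S_{\ell a}\mathbf{m_k}, \mathbf{x}^{\pm}\rangle$. Because every mask has support of length at most $\delta$ and each gap has length exactly $\delta$, the cyclic support of $S_{\ell a}\mathbf{m_k}$ can straddle at most one of the four block-gap boundaries. If the window either avoids the sign-flipped block or lies entirely inside it, then $|v_{k,\ell}^+| = |v_{k,\ell}^-|$; so only windows crossing one of the two boundaries adjacent to the sign-flipped block contribute. For any such $(k,\ell)$ I decompose $v_{k,\ell}^{\pm} = P_{k,\ell} \pm Q_{k,\ell}$, where $P_{k,\ell}$ collects the contribution from the adjacent gap (common to $\mathbf{x}^+$ and $\mathbf{x}^-$, satisfying $|P_{k,\ell}| \leq p(\delta-t)\|\mathbf{m}\|_\infty$) and $Q_{k,\ell}$ collects the contribution from the sign-flipped block (satisfying $|Q_{k,\ell}| \leq qt\|\mathbf{m}\|_\infty$); here $t \in \{1,\ldots,\delta-1\}$ is the overlap of the support with the sign-flipped block. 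The key pointwise estimate is the elementary inequality
\[
\bigl||v^+| - |v^-|\bigr| \;\leq\; 2\min(|P|,|Q|),
\]
which I prove by combining $\bigl||v^+|^2-|v^-|^2\bigr| = 4|\mathrm{Re}(P\overline{Q})| \leq 4|P||Q|$ with the lower bound $|v^+|+|v^-| \geq 2\max(|P|,|Q|)$; the latter follows from the identity $|v^+|^2|v^-|^2 = (|P|^2+|Q|^2)^2 - 4(\mathrm{Re}\,P\overline{Q})^2 \geq (|P|^2-|Q|^2)^2$ plugged into $(|v^+|+|v^-|)^2 = 2(|P|^2+|Q|^2) + 2|v^+||v^-|$.

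Since the achievable starting positions of $S_{\ell a}\mathbf{m_k}$ form an arithmetic progression with common difference $a$, the overlap values $t$ per boundary run through roughly $\delta/a$ multiples of $a$ in $[1,\delta-1]$. Using $\bigl||v_{k,\ell}^+|-|v_{k,\ell}^-|\bigr|^2 \leq 4\min\!\bigl(p^2(\delta-t)^2,\,q^2 t^2\bigr)\|\mathbf{m}\|_\infty^2$ and splitting the sum at the crossover $t_\star = p\delta/(p+q)$ (where the two branches of the minimum agree), the two halves of the resulting $\sum (ja)^2$ each evaluate to $O(p^2q^2\delta^3 / (a(p+q)^2))$, which is $O(p^2\delta^3/a)$ since $p \leq q$ forces $q^2/(p+q)^2 \leq 1$. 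Multiplying by $\|\mathbf{m}\|_\infty^2$ and summing over the two boundaries and over $k$ yields $\|Z(\mathbf{x}^+)-Z(\mathbf{x}^-)\|_2 \lesssim p\,\delta^{3/2}\|\mathbf{m}\|_\infty\sqrt{K/a}$; dividing the $D_2$ lower bound by this quantity and using $a = d/L$ produces \eqref{Zclaim}. The main technical obstacle is precisely this case-split in the summation: the naive estimate $\bigl||v^+|-|v^-|\bigr| \leq |v^+ - v^-| = 2|Q| \leq 2qt\|\mathbf{m}\|_\infty$ alone loses the crucial $q/p$ factor. Extracting the $1/p$ requires also the companion bound $\bigl||v^+|-|v^-|\bigr| \leq 2|P|$, applied on the shifts where it beats $2|Q|$.
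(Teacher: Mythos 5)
Your proof is correct and follows essentially the same route as the paper's: the same test vectors $\mathbf{x}^{\pm}$, the same computation giving $D_2(\mathbf{x}^+,\mathbf{x}^-)\geq q\sqrt{d}$, the same decomposition of each straddling inner product into a gap part $P$ and a sign-flipped part $Q$, and the same arithmetic-progression count of roughly $\delta/a$ straddling shifts per boundary. The only difference is that the paper stops at the reverse-triangle bound $\bigl||v^+|-|v^-|\bigr|\leq |v^++v^-|=2|P|\leq 2p(\delta-t)\|\mathbf{m}\|_\infty$ and sums that directly to get $p^2\delta^3/a$, so your sharper $2\min(|P|,|Q|)$ estimate and the crossover split at $t_\star$ are valid but not needed for the stated bound.
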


\begin{thm}\label{maintheorem1}
Let $0<p\leq q,$ and consider the map $Y$ restricted to the subset  $\mathcal{C}_{p,q} \subset \mathbb{C}^d/\sim$. 
Assume that $\delta\leq\frac{d}{4}$ and that $d=aL$ for some integer $1 \leq a<\delta.$ Then if  $A$ is any Lipschitz map (with respect to $d_1$)  such that  $A(Y(\mathbf{x}))=\mathbf{x}$ for all $\mathbf{x} \in \mathcal{C}_{p,q},$ we have that 
 \begin{equation}\label{Yclaim}
C_A\geq C\frac{qd\sqrt{a}}{p\sqrt{K}\|\mathbf{m}\|^2_\infty \delta^{5/2}} =C\frac{qd^{3/2}}{p\sqrt{KL}\|\mathbf{m}\|^2_\infty \delta^{5/2}},
\end{equation}
where $C_A$ is the Lipschitz constant of $A,$ $\|\mathbf{m}\|_\infty \coloneqq \max_{1\leq k \leq K}\|\mathbf{m_k}\|_\infty,$ and $C$ is a universal constant. 
\end{thm}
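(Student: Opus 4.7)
The plan is to construct two vectors $\mathbf{x}^+, \mathbf{x}^- \in \mathcal{C}_{p,q}$ that are far apart in $d_1$ but whose $Y$-measurements are close, so that the Lipschitz hypothesis forces $C_A \geq d_1(\mathbf{x}^+,\mathbf{x}^-)/\|Y(\mathbf{x}^+) - Y(\mathbf{x}^-)\|_2$. Following the atoll example \eqref{equ:AtollVec}, I would fill in its zero entries with magnitude $p$: setting $A := [1, d/2 - \delta]$, $C := (d/2, d - \delta]$, and $G := (d/2 - \delta, d/2] \cup (d - \delta, d]$, take $\mathbf{x}^\pm$ equal to $q$ on $A$, equal to $p$ on $G$, and equal to $\pm q$ on $C$. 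Both vectors lie in $\mathcal{C}_{p,q}$, are not $\sim$-equivalent, and agree pointwise outside of $C$.

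For the lower bound on $d_1$ I would use $d_1(u,v) \geq \|uu^* - vv^*\|_F = \bigl(\|u\|^4 + \|v\|^4 - 2|\langle u, v\rangle|^2\bigr)^{1/2}$. A direct computation yields $\|\mathbf{x}^\pm\|^2 = q^2(d - 2\delta) + 2\delta p^2$ and $\langle \mathbf{x}^+, \mathbf{x}^-\rangle = 2\delta p^2$; combining with $\delta \leq d/4$ gives $d_1(\mathbf{x}^+, \mathbf{x}^-) \gtrsim q^2 d$. For the upper bound on $\|Y(\mathbf{x}^+) - Y(\mathbf{x}^-)\|_2$, the key observation is that since $\mathbf{x}^+$ and $\mathbf{x}^-$ agree outside $C$, the measurement $Y_{k,\ell}(\mathbf{x}^+) - Y_{k,\ell}(\mathbf{x}^-)$ vanishes unless the length-$\delta$ circular support of $S_{\ell a}\mathbf{m_k}$ straddles one of the two endpoints of $C$. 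Since mask starting positions advance by $a$, this happens for at most $O(\delta/a)$ shifts per mask per boundary, so at most $O(K\delta/a)$ terms are nonzero. For each such shift, write $\langle S_{\ell a}\mathbf{m_k}, \mathbf{x}^\pm\rangle = u \pm v$, where $u$ is the mask contribution from $G$ (so $|u| \leq p\delta\|\mathbf{m}\|_\infty$) and $v$ is the contribution from $C$ (so $|v| \leq q\delta\|\mathbf{m}\|_\infty$); the identity $|u+v|^2 - |u-v|^2 = 4\mathrm{Re}(u\bar v)$ then yields
$$|Y_{k,\ell}(\mathbf{x}^+) - Y_{k,\ell}(\mathbf{x}^-)| = 4|\mathrm{Re}(u\bar v)| \leq 4|u||v| \lesssim pq\|\mathbf{m}\|_\infty^2 \delta^2.$$
Squaring and summing produces $\|Y(\mathbf{x}^+) - Y(\mathbf{x}^-)\|_2 \lesssim pq\|\mathbf{m}\|_\infty^2 \delta^{5/2}\sqrt{K/a}$, and dividing by the $d_1$ estimate yields \eqref{Yclaim}.

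The main subtlety is the cross-term identity $|u+v|^2 - |u-v|^2 = 4\mathrm{Re}(u\bar v)$, which enforces cancellation of the $|u|^2, |v|^2$ contributions in the squared-magnitude differences. A cruder triangle bound would only give $\bigl||u+v|^2 - |u-v|^2\bigr| \lesssim q^2\|\mathbf{m}\|_\infty^2 \delta^2$, losing a factor of $p/q$ and producing the wrong dependence on the dynamic range. This cancellation also explains why the denominator $\|\mathbf{m}\|_\infty^2 \delta^{5/2}$ here differs quantitatively from the $\|\mathbf{m}\|_\infty \delta^{3/2}$ of Theorem~\ref{maintheorem2}, where the simpler estimate $\bigl||u+v| - |u-v|\bigr| \leq 2\min(|u|, |v|) \lesssim p\delta\|\mathbf{m}\|_\infty$ is already sharp.
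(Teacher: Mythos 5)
Your proposal is correct and follows the same overall strategy as the paper: the same test vectors $\mathbf{x}^\pm$, the same ratio bound $C_A \geq d_1(\mathbf{x}^+,\mathbf{x}^-)/\|Y(\mathbf{x}^+)-Y(\mathbf{x}^-)\|_2$, and the same observation that only the $O(K\delta/a)$ measurements whose mask support straddles an endpoint of the sign-flipped block contribute. Two sub-steps are executed differently, and both of your variants check out. For the $d_1$ lower bound, the paper explicitly computes $M^TM$ for the block matrix $M$ with $\mathbf{x}^+\mathbf{x}^{+*}-\mathbf{x}^-\mathbf{x}^{-*}=2qM$, exhibits two eigenvectors with eigenvalue $\eta(\eta q^2+2\delta p^2)$, and concludes $d_1=4q\sqrt{\eta^2q^2+2\eta\delta p^2}\geq Cdq^2$; you instead lower-bound the nuclear norm by the Frobenius norm and use $\|uu^*-vv^*\|_F^2=\|u\|^4+\|v\|^4-2|\langle u,v\rangle|^2$ with $\langle\mathbf{x}^+,\mathbf{x}^-\rangle=2\delta p^2$, which gives the same $\gtrsim q^2d$ bound with less computation and generalizes immediately to other test-vector pairs. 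For the per-entry measurement bound, the paper writes $|Y^+_{k,\ell}-Y^-_{k,\ell}|=|Z^+_{k,\ell}+Z^-_{k,\ell}||Z^+_{k,\ell}-Z^-_{k,\ell}|\leq 4\|\mathbf{m}\|_\infty^2 q\delta pj$ and then sums $j^2$ over the arithmetic progression of admissible offsets; you use the polarization identity $|u+v|^2-|u-v|^2=4\,\mathrm{Re}(u\bar v)\leq 4|u||v|$ with the uniform bounds $|u|\leq p\delta\|\mathbf{m}\|_\infty$ (valid because the off-$C$ portion of a straddling mask lies entirely in a width-$\delta$ gap where $|\mathbf{x}^\pm|=p$) and $|v|\leq q\delta\|\mathbf{m}\|_\infty$, then multiply by the count $O(\delta/a)$ of straddling shifts; both routes give $\|Y^+-Y^-\|_2^2\lesssim Kp^2q^2\|\mathbf{m}\|_\infty^4\delta^5/a$, since $\sum_t (at)^2$ over $t\leq\delta/a$ and $(\delta/a)\cdot\delta^2$ are of the same order. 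Your closing remark about why the cross-term cancellation (rather than a crude triangle bound) is needed to retain the factor $p$ is exactly the mechanism the paper exploits via its entrywise $Z$-estimate, and your explanation of the $\delta^{3/2}$ versus $\delta^{5/2}$ discrepancy between the two theorems is accurate.
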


Ideally, we would like a stable phase retrieval algorithm to have have $C_A = \mathcal{O}(1)$ (or $C_B = \mathcal{O}(1)$) while using only $KL = \mathcal{O}(d)$ total measurements.  Unfortunately, Theorems~\ref{maintheorem2}~and~\ref{maintheorem1} demonstrate that this is impossible when $\delta$, the support size of the masks, is very small.  At best, a phase retrieval algorithm that uses only $KL = \mathcal{O}(d)$ local correlation measurements can have global Lipschitz constants that are of size $\mathcal{O}\left(\frac{d}{\delta^{5/2}} \right)$ in the case of $Y$-measurements, and $\mathcal{O} \left( \frac{\sqrt{d}}{\delta^{3/2}} \right)$ in the case of $Z$-measurements.  This implies that extremely large samples $\mathbf{x}$ (i.e., with $d$ large) cannot be stably recovered from measurements which are noisy and extremely localized (i.e., with $\delta$ small)  in the worst case using only $\mathcal{O}(d)$ total measurements.  To contextualize this in an application setting, one may consider recent research initiatives aimed at achieving the ability to rapidly obtain detailed images of relatively large circuit boards \cite{IARPA_BAA}.  One approach to solving this problem involves using ptychographic imaging and taking STFT magnitude measurements of the circuit board using a probe (i.e., an STFT window function) with a comparably small effective support size $\delta$.  In this context, Theorem~\ref{maintheorem1} implies that the probe's effective support size should not be taken to be too small unless additional measurements are taken in order to help ensure stability to noise.

As we shall see, the proofs of both Theorems~\ref{maintheorem2}~and~\ref{maintheorem1}  will depend on signals modeled along the lines of \eqref{equ:AtollVec} whose support sets are composed of  two disjoint components separated from one another by at least $\delta$ zeroes.  In \cite{iwen2016fpr} it was noted that phase retrieval of such signals using locally supported masks $\mathbf{m_k}$ of the type proposed herein was impossible, and that recovery of signals with more than $\delta$ consecutive small entries appeared to be unstable.  Interestingly enough, subsequent work in the infinite-dimensional setting has independently identified such disjointly supported signals as being the principal cause of instability in phase retrieval problems using continuous Gabor measurements as well because they lead to measurements which are supported on disjoint subsets of the time-frequency plane \cite{Alaifari2018,Grohs2018}.  Similarly, we will use (essentially) disjointly supported signals similar to those in \eqref{equ:AtollVec} to provide lower bounds on the Lipschitz constants of our maps $A$ and $B$ using the fact that they $(i)$ are relatively far apart with respect to the $D_2$ and $d_1$ metrics defined above and $(ii)$ produce measurements with respect to our maps $Y$ and $Z$  which are (nearly) identical. 

\section{The Proofs of Theorem \ref{maintheorem2} and Theorem \ref{maintheorem1}}

We are now prepared to prove our main results.

\begin{proof}[Proof of Theorem \ref{maintheorem2}]
 First observe that for  any $\mathbf{x},\mathbf{x'}\in\mathcal{C}_{p,q},$
\begin{equation*}
D_2(\mathbf{x},\mathbf{x'}) = D_2(B(Z(\mathbf{x})),B(Z(\mathbf{x'})))\leq C_B\|Z(\mathbf{x})-Z(\mathbf{x'})\|_2.
\end{equation*}
Therefore,
\begin{equation}\label{ratioB}
C_B \geq \sup\frac{D_2(\mathbf{x},\mathbf{x'})}{\|Z(\mathbf{x})-Z(\mathbf{x'})\|_2},
\end{equation}
where the supremum is taken over all $\mathbf{x}\not\sim\mathbf{x'}\in\mathcal{C}_{p,q}.$ 
Define  $\mathbf{x}^+$ and $\mathbf{x}^-\in \mathbb{C}^d$ by 
\begin{equation*}
\mathbf{x}^{\pm}(n) \coloneqq \begin{cases} 
      q, & 1\leq n \leq \frac{d}{2}-\delta \\
      p, & \frac{d}{2}-\delta < n \leq \frac{d}{2} \\
      \pm q,& \frac{d}{2}<n\leq d-\delta \\
	  p, & d-\delta<n\leq d
   \end{cases}.
\end{equation*} 
Note that $D_2(\mathbf{x}^+,\mathbf{x}^-) \geq q\sqrt{d}$ 
since $d<\frac{d}{4}$ and for all $\theta\in\mathbb{R},$
\begin{align*}
\|\mathbf{x}^+-\mathbbm{e}^{\mathbbm{i}\theta}\mathbf{x}^-\|_2^2&\geq \sum_{n=1}^{d/2-\delta}|(1-\mathbbm{e}^{\mathbbm{i}\theta})q|^2 + \sum_{n=d/2+1}^{d-\delta}|(1+\mathbbm{e}^{\mathbbm{i}\theta})q|^2\\
 &= \left(\frac{d}{2}-\delta\right)q^2|1-\mathbbm{e}^{\mathbbm{i}\theta}|^2+\left(\frac{d}{2}-\delta\right)q^2|1+\mathbbm{e}^{\mathbbm{i}\theta}|^2\\
&\geq \frac{d}{4} q^2\left(|1-\mathbbm{e}^{\mathbbm{i}\theta}|^2+ |1+\mathbbm{e}^{\mathbbm{i}\theta}|^2\right) = dq^2,
\end{align*}
since $ |1-\mathbbm{e}^{\mathbbm{i}\theta}|^2+ |1+\mathbbm{e}^{\mathbbm{i}\theta}|^2=4$ for all $\theta.$ 
Let $Z^\pm\coloneqq Z(\mathbf{x^\pm}).$ 
We will show that 
\begin{equation}\label{Zdiff}
\|Z^+-Z^-\|_2\leq C\sqrt{K}p\|\mathbf{m}\|_\infty\frac{\delta^{3/2}}{\sqrt{a}}.
\end{equation}
 Since $B(Z^\pm)=\mathbf{x^\pm},$ combining this with (\ref{ratioB}) will  complete the proof.

Observe that for all $k,$ the support of $S_{\ell a}\mathbf{m_k}$ is contained in $[1+\ell a,\delta+\ell a].$ Therefore, $Z^+_{k,\ell} =   Z^-_{k,\ell}$ except when  $1+\ell a\leq \frac{d}{2} < \delta+\ell a$ or $1+\ell a \leq d-\delta <\delta+\ell a$ since if the support of $S_{\ell a}\mathbf{m_k}$ does not intersect $(\frac{d}{2},d-\delta],$ we have that $\langle S_{\ell a} \mathbf{m_k},\mathbf{x^+}\rangle = \langle S_{\ell a} \mathbf{m_k},\mathbf{x^-}\rangle,$ and if the support of $S_{\ell a}\mathbf{m_k}$ is contained in $(\frac{d}{2},d-\delta],$ then $\langle S_{\ell a}\mathbf{m_k},\mathbf{x^+}\rangle = -\langle S_{\ell a} \mathbf{m_k},\mathbf{x^-}\rangle.$ We will restrict attention to the case where $1+\ell a \leq d-\delta <\delta+\ell a.$ The case where $1+\ell a\leq \frac{d}{2} < \delta+\ell a$ is similar.

For fixed $\ell$ such that $1+\ell a\leq d-\delta< \delta+\ell a,$ let 
\begin{equation*}
j \coloneqq \ell a+2\delta-d
\end{equation*} so that the last $j$ nonzero entries of $S_{\ell a}\mathbf{m_k}$ are located  in positions greater than $d-\delta$ and the first $\delta-j$ nonzero entries are located in positions less than or equal to $d-\delta.$
(Note that $1\leq j \leq \delta-1.)$ Then,
\begin{equation*}
\langle S_{\ell a}\mathbf{m_k}, \mathbf{x^-}\rangle = -q\sum_{n=1}^{\delta-j}\mathbf{m_k}(n)+p\sum_{n=\delta-j+1}^{\delta}\mathbf{m_k}(n) = -\langle S_{\ell a}\mathbf{m_k}, \mathbf{x^+}\rangle + 2p\sum_{n=\delta-j+1}^{\delta}\mathbf{m_k}(n) . 
\end{equation*} 
Therefore, 
\begin{equation}\label{entrywise}
|Z^-_{k,\ell}-Z^+_{k,\ell}|\leq 2jp\|\mathbf{m}\|_\infty.
\end{equation}
Since $1\leq j\leq \delta-1,$ summing over the set of $\ell$ such that $1+\ell a\leq d-\delta< \delta+\ell a,$ 
 corresponds to summing over  $j=a,2a,\ldots, \lfloor\frac{\delta-1}{a}\rfloor a$ if $a$ divides $d-2\delta,$ or summing over $j=j_0,j_0+a,j_0+2a,\ldots,j_0+\lfloor\frac{\delta-j_0-1}{a}\rfloor a$ for some $0<j_0< a$ otherwise.
Therefore, in either case
\begin{equation}\label{Zdist}
\|Z^+-Z^-\|^2_2 \leq C\|\mathbf{m}\|_\infty^2p^2\sum_{k=1}^K\sum_{t=1}^{\lfloor\delta/a\rfloor+1} |at|^2\leq CKa^2\|\mathbf{m}\|_\infty^2p^2\left(\frac{\delta}{a}\right)^3=CKp^2\|\mathbf{m}\|^2_\infty\frac{\delta^3}{a},
\end{equation}
which proves (\ref{Zdiff}) and completes the proof.
\end{proof}
\begin{proof}[Proof of Theorem \ref{maintheorem1}]
Similarly to the proof of Theorem \ref{maintheorem2},

\begin{equation}\label{ratioA}
C_A \geq \sup\frac{d_1(\mathbf{x},\mathbf{x'})}{\|Y(\mathbf{x})-Y(\mathbf{x'})\|_2},
\end{equation}
where the supremum is again taken over all $\mathbf{x}\not\sim\mathbf{x'}\in\mathcal{C}_{p,q}.$ 
Let $\mathbf{x^\pm}$ be in as in the proof of Theorem \ref{maintheorem2}, and let $Y^\pm \coloneqq Y(\mathbf{x^\pm}).$ By the same reasoning as in the previous proof, $Y^+_{k,\ell} =   Y^-_{k,\ell},$ unless $1+\ell a\leq \frac{d}{2} < \delta+\ell a$ or $1+\ell a \leq d-\delta <\delta+\ell a.$ We will again restrict attention to the case where $1+\ell a \leq d-\delta <\delta+\ell a.$ Let $\ell$ be such that $1+\ell a\leq d-\delta <d +\ell a,$ and again let 
$j \coloneqq \ell a+2\delta-d.$

Since for all $k$ and $\ell,$ we have\begin{equation*}|Z^\pm_{k,\ell}|\leq q\|\mathbf{m}\|_\infty\delta,\end{equation*}
 we see
\begin{equation*}
|Y^+_{k,\ell}-Y^-_{k,\ell}|_2 = |(Z^+_{k,\ell})^2-(Z^-_{k,\ell})^2|=|Z^+_{k,\ell}+Z^-_{k,\ell}||Z^+_{k,\ell}-Z^-_{k,\ell}|\leq 4\|\mathbf{m}\|^2_\infty q\delta p j, 
\end{equation*}
by (\ref{entrywise}). 
Therefore,  by the same reasoning as in (\ref{Zdist}),
\begin{equation*}
\|Y^+-Y^-\|_2^2 \leq C \|\mathbf{m}\|^4_\infty q^2\delta^2p^2\sum_{k=1}^K\sum_{t=1}^{\lfloor\delta/a\rfloor+1}  \left| ta\right|^2 \leq CK\|\mathbf{m}\|^4_\infty q^2\delta^2p^2 a^2 \left(\frac{\delta}{a}\right)^3 = CK\|\mathbf{m}\|^4_\infty q^2p^2 \frac{\delta^5}{a}.
\end{equation*}
Thus, 
the proof will follow from (\ref{ratioA}) once we show  $d_1(\mathbf{x^+},\mathbf{x^-})\geq Cdq^2$.

For $n,m \in \mathbb{N},$ 
let $0_{n\times m}$ and $\mathbbm{1}_{n\times m}$ denote the $n\times m$ matrices of all zeros and of all ones respectively. With this notation we see that

\begin{equation*}
\mathbf{x^\pm} = (q\mathbbm{1}_{1\times\eta},p\mathbbm{1}_{1\times\delta},\pm q\mathbbm{1}_{1\times\eta},p\mathbbm{1}_{1\times\delta})^T,
\end{equation*}
 and
\begin{equation*}
\mathbf{x^{\pm}}\mathbf{x^\pm}^*=\begin{pmatrix}
q^2\mathbbm{1}_{\eta\times \eta}&qp\mathbbm{1}_{\eta\times \delta}&\pm q^2\mathbbm{1}_{\eta\times \eta} &qp\mathbbm{1}_{\eta\times \delta}\\
qp\mathbbm{1}_{\delta\times \eta}&p^2\mathbbm{1}_{\delta\times \delta}&\pm qp\mathbbm{1}_{\delta\times \eta}&p^2\mathbbm{1}_{\delta\times \delta}\\
\pm q^2\mathbbm{1}_{\eta\times \eta}&\pm qp\mathbbm{1}_{\eta\times \delta}&q^2\mathbbm{1}_{\eta\times \eta}&\pm qp\mathbbm{1}_{\eta\times \delta}\\
qp\mathbbm{1}_{\delta\times \eta} &p^2\mathbbm{1}_{\delta\times \delta}&\pm qp\mathbbm{1}_{\delta\times \eta}&p^2\mathbbm{1}_{\delta\times \delta}
\end{pmatrix},
\end{equation*}
where $\eta\coloneqq  \frac{d}{2}-\delta.$ 
Therefore,
\begin{equation*}
\mathbf{x^{+}}\mathbf{x^+}^*-\mathbf{x^{-}}\mathbf{x^-}^*=2q\begin{pmatrix}
0_{\eta\times \eta}&0_{\eta\times \delta}& q\mathbbm{1}_{\eta\times \eta} &0_{\eta\times \delta}\\
0_{\delta\times \eta}&0_{\delta\times \delta}& p\mathbbm{1}_{\delta\times \eta}&0_{\delta\times \delta}\\
 q\mathbbm{1}_{\eta\times \eta}& p\mathbbm{1}_{\eta\times \delta}&0_{\eta\times \eta}&p\mathbbm{1}_{\eta\times \delta}\\
0_{\delta\times \eta} &0_{\delta\times \delta}& p\mathbbm{1}_{\delta\times \eta}&0_{\delta\times \delta}
\end{pmatrix}.
\end{equation*}

We will show that the matrix
\begin{equation*}M\coloneqq
\begin{pmatrix}
0_{\eta\times \eta}&0_{\eta\times \delta}& q\mathbbm{1}_{\eta\times \eta} &0_{\eta\times \delta}\\
0_{\delta\times \eta}&0_{\delta\times \delta}& p\mathbbm{1}_{\delta\times \eta}&0_{\delta\times \delta}\\
 q\mathbbm{1}_{\eta\times \eta}& p\mathbbm{1}_{\eta\times \delta}&0_{\eta\times \eta}&p\mathbbm{1}_{\eta\times \delta}\\
0_{\delta\times \eta} &0_{\delta\times \delta}& p\mathbbm{1}_{\delta\times \eta}&0_{\delta\times \delta}
\end{pmatrix}\in\mathbb{R}^{d\times d}
\end{equation*}
has two nonzero singular values given by 
\begin{equation}\label{singvals}
\sigma_1=\sigma_2=\sqrt{\eta^2q^2+2\eta\delta p^2}.
\end{equation} This will imply $d_1(\mathbf{x^+},\mathbf{x^-}) = 4q\sqrt{\eta^2q^2+2\eta\delta p^2}\geq Cdq^2$ as desired.  

Using the fact that $\mathbbm{1}_{m\times n}\mathbbm{1}_{n\times k}=n\mathbbm{1}_{m\times k},$ we see that 
\begin{align*}M^TM&=
\begin{pmatrix}
q\mathbbm{1}_{\eta\times \eta}q\mathbbm{1}_{\eta\times \eta}&q\mathbbm{1}_{\eta\times \eta}p\mathbbm{1}_{\eta\times \delta}& 0_{\eta\times \eta} &q\mathbbm{1}_{\eta\times \eta}p\mathbbm{1}_{\eta\times \delta}\\
p\mathbbm{1}_{\delta\times \eta}q\mathbbm{1}_{\eta\times \eta}&p\mathbbm{1}_{\delta\times \eta}p\mathbbm{1}_{\eta\times \delta}&0_{\delta\times \eta}&p\mathbbm{1}_{\delta\times \eta}p_{\eta\times \delta}\\
0_{\eta\times \eta}& 0_{\eta\times \delta}&q\mathbbm{1}_{\eta\times \eta}q\mathbbm{1}_{\eta\times \eta}+2p\mathbbm{1}_{\eta\times \delta}p\mathbbm{1}_{\delta\times \eta}& 0_{\eta\times \delta}\\
p\mathbbm{1}_{\delta\times \eta}q\mathbbm{1}_{\eta\times \eta} &p\mathbbm{1}_{\delta\times \eta}p\mathbbm{1}_{\eta\times \delta}& 0_{\delta\times \eta}&p\mathbbm{1}_{\delta\times \eta}p\mathbbm{1}_{\eta\times \delta}
\end{pmatrix}\\
&=
\begin{pmatrix}
\eta q^2\mathbbm{1}_{\eta\times \eta}&\eta qp\mathbbm{1}_{\eta\times \delta}& 0_{\eta\times \eta} &\eta qp\mathbbm{1}_{\eta\times \delta}\\
\eta qp\mathbbm{1}_{\delta\times \eta}&\eta p^2\mathbbm{1}_{\delta\times \delta}&0_{\delta\times \eta}&\eta p^2\mathbbm{1}_{\delta\times \delta}\\
0_{\eta\times \eta}& 0_{\eta\times \delta}&\eta q^2\mathbbm{1}_{\eta\times \eta}+2\delta p^2\mathbbm{1}_{\eta\times \eta}& 0_{\eta\times \delta}\\
\eta qp\mathbbm{1}_{\delta\times \eta} &\eta p^2\mathbbm{1}_{\delta\times \delta}& 0_{\delta\times \eta}&\eta p^2\mathbbm{1}_{\delta\times \delta}
\end{pmatrix}.
\end{align*}
Therefore, $M^TM$ has rank at most two because the second block of rows is equal to the fourth block of rows, which in turn is a multiple of the first block of rows. (Each block can of course have at most one linearly independent row.) We may check that two linearly independent eigenvectors are given by \begin{equation*}(0_{1\times\eta},0_{1\times\delta},\mathbbm{1}_{1\times\eta},0_{1\times\delta})^T\end{equation*}
and \begin{equation*} (q\mathbbm{1}_{1\times\eta},p\mathbbm{1}_{1\times\delta},0_{1\times\eta},p\mathbbm{1}_{1\times\delta})^T,\end{equation*} each with eigenvalue $\eta(\eta q^2+2\delta p^2).$ This proves (\ref{singvals}) and therefore completes the proof.
\end{proof}

\section{Examples:  Lower Bounds for Specific Measurement Masks}
In this section, we will see that the estimates of Theorems \ref{maintheorem2} and \ref{maintheorem1} can be improved for specific choices of well-conditioned measurement masks.

\subsection{Windowed Fourier Measurement Masks}
In this subsection, we consider a family of masks $\{\mathbf{m_k}\}_{k=1}^{2\delta-1},$ defined by
\begin{equation}\label{Fouriermask} \mathbf{m_k}(n) \coloneqq \begin{cases} 
      \frac{\mathbbm{e}^{-n/b}}{(2\delta-1)^{1/4}}\mathbbm{e}^{2\pi \mathbbm{i}(k-1)(n-1)/(2\delta-1)}& 1\leq n \leq \delta \\
      0 & \delta<n\leq d 
   \end{cases},
\end{equation} for some fixed parameter $b>4.$ Masks of this form are closely related to those used in ptychographic imaging (see, for example, \cite{iwen2016fpr}, Section 1.3 and the references provided therein). 
In \cite{iwen2016fpr} it was shown that, with this choice of masks, the map $Y,$ restricted to the subset of $\mathbb{C}^d$ where $\mathbf{x}(n)\neq 0$ for all $1\leq n \leq d,$ can be inverted by  an algorithm which is both efficient and numerically stable in the case where $L=d.$ 

\begin{corollary}\label{Fouriercorr2}
Let $0<p\leq q,$ and consider the map $Z$ restricted to the subset $\mathcal{C}_{p,q}\subset\mathbb{C}^d/\sim.$ 
 Assume that $\delta\leq\frac{d}{4}$ and that $d=aL$ for some integer $a<\delta.$ Then if  $\{\mathbf{m_k}\}_{k=1}^{2\delta-1}$ is the family of masks given by (\ref{Fouriermask}) and $B$ is any Lipschitz map (with respect to $d_1$) such $B(Z(\mathbf{x}))=\mathbf{x}$ for all $\x\in\mathcal{C}_{p,q},$ then  
\begin{equation}\label{Fourier2}
C_B\geq CK_b \frac{q\sqrt{da}}{p(2\delta-1)^{1/4}\delta^{1/2}}=CK_b \frac{qd}{p\sqrt{L}(2\delta-1)^{1/4}\delta^{1/2}},
\end{equation}
where 
$K_{b}\coloneqq \mathbbm{e}^{1/b}-1,$ 
 $C_B$ is the Lipschitz constant of $B,$ and $C$ is a universal constant.\end{corollary}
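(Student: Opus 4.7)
The plan is to follow the scaffolding of the proof of Theorem \ref{maintheorem2} nearly verbatim, substituting a sharper estimate for the mask boundary sums that exploits the geometric amplitude decay of the masks in (\ref{Fouriermask}).  I first reuse the pair $\mathbf{x}^\pm \in \mathcal{C}_{p,q}$ constructed there, so that $D_2(\mathbf{x}^+,\mathbf{x}^-) \geq q\sqrt{d}$ and $C_B \geq q\sqrt{d}/\|Z^+ - Z^-\|_2$ by the ratio inequality (\ref{ratioB}).  The same shift analysis then shows $Z^+_{k,\ell} = Z^-_{k,\ell}$ unless the support of $S_{\ell a}\mathbf{m_k}$ straddles one of the boundaries at positions $d/2$ or $d-\delta$; in the latter boundary case, writing $j := \ell a + 2\delta - d$, the reverse triangle inequality gives
\[
 |Z^+_{k,\ell} - Z^-_{k,\ell}| \leq 2p \left| \sum_{n=\delta-j+1}^{\delta} \mathbf{m_k}(n) \right|
\]
exactly as in the proof of Theorem \ref{maintheorem2}.

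The only new ingredient is a mask-specific estimate of this tail sum.  Since $|\mathbf{m_k}(n)| = \mathbbm{e}^{-n/b}/(2\delta-1)^{1/4}$ is independent of $k$ and geometrically decreasing in $n$, I bound pointwise
\[
 \left|\sum_{n=\delta-j+1}^{\delta} \mathbf{m_k}(n)\right| \leq \frac{1}{(2\delta-1)^{1/4}}\sum_{n=1}^{\infty} \mathbbm{e}^{-n/b} = \frac{1}{(2\delta-1)^{1/4}(\mathbbm{e}^{1/b}-1)} = \frac{1}{(2\delta-1)^{1/4}\,K_b},
\]
using the identity $1 - \mathbbm{e}^{-1/b} = \mathbbm{e}^{-1/b} K_b$.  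This bound is uniform in $j$ and $k$ and directly produces the $K_b^{-1}$ factor appearing in (\ref{Fourier2}), replacing the cruder $j\|\mathbf{m}\|_\infty$ estimate that was sufficient for generic masks in Theorem \ref{maintheorem2}.

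To conclude, I count $K = 2\delta - 1$ masks and a constant multiple of $\delta/a$ bad shifts $\ell$ (combining the two boundary regions near $d/2$ and $d-\delta$); squaring the pointwise bound and summing over $k$ and $\ell$ gives
\[
 \|Z^+ - Z^-\|_2^2 \leq 4p^2 \cdot (2\delta-1) \cdot \frac{C\delta}{a} \cdot \frac{1}{\sqrt{2\delta-1}\,K_b^2} = \frac{C p^2 \sqrt{2\delta-1}\,\delta}{K_b^2\,a},
\]
which plugged into the ratio lower bound and rewritten via $d = aL$ yields (\ref{Fourier2}).  No serious obstacle is anticipated: the key insight, shaving a factor of $\delta$ over a direct application of Theorem \ref{maintheorem2}, is simply that the exponential amplitude profile allows the boundary tail sum to be bounded by a convergent geometric series whose value is independent of the number $j$ of ``straddling'' entries.
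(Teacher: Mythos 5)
Your proposal is correct and matches the paper's own proof essentially step for step: the same pair $\mathbf{x}^\pm$, the same boundary-straddling analysis, the same bound $|Z^+_{k,\ell}-Z^-_{k,\ell}|\leq 2p\sum_{n=\delta-j+1}^{\delta}|\mathbf{m_k}(n)|$, and the same geometric-series estimate yielding the factor $1/K_b$ (the paper bounds the finite sum by $s/(1-s)$ with $s=\mathbbm{e}^{-1/b}$, which is exactly your infinite-series value), followed by the identical count of $K=2\delta-1$ masks and $O(\delta/a)$ bad shifts. No issues.
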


\begin{corollary}\label{Fouriercorr1}
Let $0<p\leq q,$ and consider the map $Y$ restricted to the subset $\mathcal{C}_{p,q}\subset\mathbb{C}^d/\sim.$ 
 Assume that $\delta\leq\frac{d}{4}$ and that $d=aL$ for some integer $a<\delta.$ Then if  $\{\mathbf{m_k}\}_{k=1}^{2\delta-1}$ is the family of masks given by (\ref{Fouriermask}) and $A$ is any Lipschitz map (with respect to $d_1$) such $A(Y(\mathbf{x}))=\mathbf{x}$,  for all $\x\in\mathcal{C}_{p,q},$ then 
\begin{equation}\label{Fourier1}
C_A\geq C K_{b}^2\frac{qd\sqrt{a}}{p\sqrt{\delta}}=C K_{b}^2\frac{qd^{3/2}}{p\sqrt{L}\sqrt{\delta}}, 
\end{equation}
where 
$K_{b}\coloneqq \mathbbm{e}^{1/b}-1,$ 
 $C_A$ is the Lipschitz constant of $A,$ and $C$ is a universal constant.\end{corollary}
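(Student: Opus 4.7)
The plan is to follow the skeleton of the proof of Theorem~\ref{maintheorem1} verbatim, reusing the atoll signals $\mathbf{x}^\pm$ defined there so that the lower bound $d_1(\mathbf{x}^+,\mathbf{x}^-) \geq C d q^2$ transfers without change (it depends only on $\mathbf{x}^\pm$ and not on the masks). The nontrivial task is to replace the pointwise estimate $|Y^+_{k,\ell} - Y^-_{k,\ell}| \leq 4\|\mathbf{m}\|_\infty^2 q \delta p j$ used there with a sharper one that exploits the exponential window structure of the Fourier masks \eqref{Fouriermask}, ultimately delivering roughly a $K_b^2 \delta^2$ improvement in the bound on $\|Y(\mathbf{x}^+) - Y(\mathbf{x}^-)\|_2$.

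The key ingredient is a $k$-uniform bound on partial sums of $\mathbf{m_k}$. Since the Fourier phase in \eqref{Fouriermask} has unit modulus, $|\mathbf{m_k}(n)| = \mathbbm{e}^{-n/b}/(2\delta-1)^{1/4}$ is independent of $k$, and the triangle inequality together with the geometric-series identity $\sum_{n=1}^\infty \mathbbm{e}^{-n/b} = 1/K_b$ yields, for every sub-interval $I \subseteq [\delta]$,
\[ \Bigl|\sum_{n \in I}\mathbf{m_k}(n)\Bigr| \leq \frac{1}{K_b\,(2\delta-1)^{1/4}}. \]
Setting $u^+ := \langle S_{\ell a}\mathbf{m_k}, \mathbf{x}^+ \rangle$ and $v := 2p \sum_{n = \delta - j + 1}^{\delta}\mathbf{m_k}(n)$, so that $\langle S_{\ell a}\mathbf{m_k}, \mathbf{x}^- \rangle = -u^+ + v$ as in the proof of Theorem~\ref{maintheorem2}, the above uniform bound immediately gives $Z^+_{k,\ell} + Z^-_{k,\ell} \leq C q / (K_b (2\delta-1)^{1/4})$, while the reverse triangle inequality $\bigl||u^+| - |u^+ - v|\bigr| \leq |v|$ gives $|Z^+_{k,\ell} - Z^-_{k,\ell}| \leq |v| \leq C p / (K_b (2\delta-1)^{1/4})$. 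Multiplying these factors,
\[ |Y^+_{k,\ell} - Y^-_{k,\ell}| = \bigl(Z^+_{k,\ell} + Z^-_{k,\ell}\bigr)\,|Z^+_{k,\ell} - Z^-_{k,\ell}| \leq \frac{C p q}{K_b^2\,(2\delta-1)^{1/2}}, \]
uniformly in $k$ and in the boundary-crossing $\ell$. Squaring and summing over the $K = 2\delta - 1$ frequencies cancels the $(2\delta-1)$ in the denominator, and summing over the $O(\delta/a)$ boundary-crossing $\ell$-indices (counted exactly as in the proof of Theorem~\ref{maintheorem2}) yields $\|Y(\mathbf{x}^+) - Y(\mathbf{x}^-)\|_2^2 \leq C p^2 q^2 \delta / (K_b^4 a)$. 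Substituting this and $d_1(\mathbf{x}^+, \mathbf{x}^-) \geq C d q^2$ into $C_A \geq d_1(\mathbf{x}^+,\mathbf{x}^-)/\|Y(\mathbf{x}^+) - Y(\mathbf{x}^-)\|_2$ then produces \eqref{Fourier1}.

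The main subtle point, and the source of the $\delta^2$ improvement over Theorem~\ref{maintheorem1}, is that both the reverse-triangle bound $|Z^+_{k,\ell} - Z^-_{k,\ell}| \leq |v|$ and the above $\ell^\infty$ estimate on $v$ are completely independent of $j$ (hence of $\ell$). Replacing the latter with the naive estimate $|\sum_{n \in I}\mathbf{m_k}(n)| \leq |I| \cdot \|\mathbf{m_k}\|_\infty \leq j /(2\delta-1)^{1/4}$ would bring back the $j$-dependence and reinstate the $j^2$-arithmetic sum over $\ell$ appearing in the proof of Theorem~\ref{maintheorem1}, losing the geometric decay that in our version turns the $\ell$-sum into a $\delta$-independent factor bounded only by the counting $\delta/a$.
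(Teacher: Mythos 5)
Your proposal is correct and follows essentially the same route as the paper's proof: the same atoll vectors $\mathbf{x}^\pm$, the same geometric-series bound $\sum_{n=\delta-j+1}^{\delta}|\mathbf{m_k}(n)|\leq (2\delta-1)^{-1/4}\,\mathbbm{e}^{-1/b}/(1-\mathbbm{e}^{-1/b}) = (2\delta-1)^{-1/4}/K_b$ making both factors $|Z^+_{k,\ell}\pm Z^-_{k,\ell}|$ independent of $j$, the same count of at most $\delta/a$ boundary-crossing shifts per mask, and the same lower bound $d_1(\mathbf{x}^+,\mathbf{x}^-)\geq Cdq^2$. Your closing observation about where the $\delta^2$ improvement over Theorem~\ref{maintheorem1} comes from is exactly the point of the corollary.
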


\begin{remark} For this choice of masks, $K=2\delta-1$ and $\|\mathbf{m}\|_\infty=\mathbbm{e}^{-1/b}(2\delta-1)^{-1/4}.$ Therefore, the constants obtained in Corollaries \ref{Fouriercorr2} and \ref{Fouriercorr1} have the same asymptotic behavior with respect to $a$ and $d,$ but are larger with respect to $\delta$ than those obtained by directly applying Theorems \ref{maintheorem2} and  \ref{maintheorem1} to this choice of masks. 
\end{remark}

\begin{remark} 
Similar lower bounds can be derived for any choice of masks along the lines of \eqref{Fouriermask} whose nonzero entries have magnitudes that form a truncated geometric progression.
\end{remark}

%
\begin{proof}[Proof of Corollary \ref{Fouriercorr2}] Let $\mathbf{x^\pm}$ and $Z^\pm$  be as in the proofs of Theorems \ref{maintheorem2} and \ref{maintheorem1}.
As before, note that  $Z^+_{k,\ell} =   Z^-_{k,\ell}$ except when either $1+\ell a\leq \frac{d}{2} < \delta+\ell a$ or $1+\ell a \leq d-\delta <\delta+\ell a$. We will again restrict attention to the case where $1+\ell a \leq d-\delta <\delta+\ell a.$ 

Fix $\ell$ such that $1+\ell a\leq d-\delta<\delta+\ell a,$ and as in the proof of the preceding theorems, let $j \coloneqq \ell a+2\delta-d$ so that the last $j$ nonzero entries of $S_{\ell a}\mathbf{m_k}$ are located  in positions greater than $d-\delta$ and the first $\delta-j$ nonzero entries are located in positions less than or equal to $d-\delta.$
We have seen that
\begin{equation*}
Z^{\pm}_{k,\ell} = \left|\pm q \sum_{n=1}^{\delta-j}\mathbf{m_k}(n) + p \sum_{n=\delta-j+1}^{\delta} \mathbf{m_k}(n)\right|.
\end{equation*}
Therefore, 
\begin{equation}\label{pwF}
|Z^-_{k,\ell}-Z^+_{k,\ell}|\leq 2p\left|\sum_{n=\delta-j+1}^{\delta} \mathbf{m_k}(n)\right|\leq 2p\sum_{n=\delta-j+1}^\delta |\mathbf{m_k}(n)|.
\end{equation}
To estimate the above sum, we note that $|\mathbf{m_k}(n)|=(2\delta-1)^{-1/4}s^n,$ where  $s\coloneqq\mathbbm{e}^{-1/b}.$
Since $0<s<1,$
\begin{equation*}
\sum_{n=\delta-j+1}^{\delta} \left|\mathbf{m_k}(n)\right| \leq (2\delta-1)^{-1/4}\sum_{n=1}^\delta s^n \leq (2\delta-1)^{-1/4}\frac{s}{1-s}.
\end{equation*}
For each $1\leq k \leq 2\delta-1,$ there are at most $\frac{\delta}{a}$ choices of $\ell$ such that $1+\ell a\leq d-\delta<\delta+\ell a.$ Therefore,
\begin{align*}
\|Z^+-Z^-\|_2^2 &\leq C(2\delta-1)\frac{\delta}{a}p^2(2\delta-1)^{-1/2}\left(\frac{s}{1-s}\right)^2\\
&=C(2\delta-1)^{1/2}\frac{\delta}{a}p^2\left(\frac{\mathbbm{e}^{-1/b}}{1-\mathbbm{e}^{-1/b}}\right)^2\\
&=C(2\delta-1)^{1/2}\frac{\delta}{a}p^2\left(\frac{1}{\mathbbm{e}^{1/b}-1}\right)^2.
\end{align*}
Recalling that $D_2(\mathbf{x^+},\mathbf{x^-})\geq q\sqrt{d}$ as shown in the proof of Theorem \ref{maintheorem2} and applying (\ref{ratioB}) completes the proof.
\end{proof}
\begin{proof}[Proof of Corollary \ref{Fouriercorr1}]Let $\mathbf{x^\pm}$ and $Y^\pm$  be as in the proofs of Theorems \ref{maintheorem2} and \ref{maintheorem1}.
Note that for all $k,\ell,$ 
\begin{equation}
|Z^\pm_{k,\ell}| \leq q\sum_{n=1}^\delta |\mathbf{m_k}(n)|
\leq q(2\delta-1)^{-1/4}\sum_{n=1}^\delta s^n
\leq q(2\delta-1)^{-1/4}\frac{s}{1-s},\label{pwF2}
\end{equation}
where $s=\mathbbm{e}^{-1/b}$ as in the proof of Corollary \ref{Fouriercorr1}.
We again note that  $Y^+_{k,\ell} =   Y^-_{k,\ell}$ except when either $1+\ell a\leq \frac{d}{2} < \delta+\ell a$ or $1+\ell a \leq d-\delta <\delta+\ell a$ and again restrict attention to the case where $1+\ell a \leq d-\delta <\delta+\ell a.$ 
 Combining (\ref{pwF}) and (\ref{pwF2}) gives 
\begin{align*}
|Y^+_{k,\ell}-Y^-_{k,\ell}| &= |Z^+_{k,\ell}+Z^-_{k,\ell}||Z^+_{k,\ell}-Z^-_{k,\ell}|\\
&\leq Cqp (2\delta-1)^{-1/2}\left(\frac{s}{1-s}\right)^2.
\end{align*}
  For each $1\leq k \leq 2\delta-1,$ there are at most $\frac{\delta}{a}$ choices of $\ell$ such that $1+\ell a\leq d-\delta<\delta+\ell a.$
 Therefore, \begin{align*}
\|Y^+-Y^-\|_2^2 &\leq C (2\delta-1) \frac{\delta}{a}q^2p^2(2\delta-1)^{-1}\left(\frac{s}{1-s}\right)^4\\
&\leq C  \frac{\delta}{a}q^2p^2\left(\frac{\mathbbm{e}^{-1/b}}{1-\mathbbm{e}^{-1/b}}\right)^4\\
&= C  \frac{\delta}{a}q^2p^2\left(\frac{1}{\mathbbm{e}^{1/b}-1}\right)^4.
\end{align*}
Recalling $d_1(\mathbf{x^+},\mathbf{x^-})\geq Cdq^2,$ as shown in the proof of Theorem \ref{maintheorem2}, completes the proof.
\end{proof}

\subsection{Two-Shot Measurement Masks} Consider the family of masks $\{\mathbf{m_k}\}_{k=1}^{2\delta-1}$ defined by \begin{align}
\mathbf{m_1} &\coloneqq  \mathbf{e_1 }\nonumber\\
\label{holodef}\mathbf{m_{2j}}&\coloneqq \mathbf{e_1}+\mathbf{e_{j+1}}\\
\mathbf{m_{2j+1}} &\coloneqq  \mathbf{e_1}+\mathbbm{i}\mathbf{e_{j+1}}\nonumber
\end{align} for $1\leq j \leq \delta-1,$ where $\{\mathbf{e_1},\ldots,\mathbf{e_d}\}$ is the standard orthonormal basis for $\mathbb{R}^d.$  
In \cite{robustPR} it was shown that, with this choice of masks, the map $Y$ is injective on the subset of $\mathbb{C}^d$ where all entries are nonzero and can be inverted through a well-conditioned algorithm in the case $L=d.$

\begin{corollary}\label{holoprop2}Fix $0<p\leq q,$ and consider the map $Z$ restricted to the subset $\mathcal{C}_{p,q}\subset\mathbb{C}^d/\sim.$
Assume that $\delta\leq\frac{d}{4}$ and that $d=aL$ for some integer $a<\delta.$ Then if $\{\mathbf{m_k}\}_{k=1}^{2\delta-1}$ is the family of masks defined by (\ref{holodef}) and  $B$ is any Lipschitz map (with respect to $D_2$)  such that  $B(Z(\mathbf{x}))=\mathbf{x}$ for all $\x\in\mathcal{C}_{p,q},$ then  
 
\begin{equation*}
\label{holo2}C_B\geq C\frac{q\sqrt{da}}{p\delta}=C\frac{qd}{\sqrt{L}p\delta},
\end{equation*}  
where $C_B$ is the Lipschitz constant of $B$ and $C$ is a universal constant.
\end{corollary}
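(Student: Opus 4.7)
The plan is to mimic the strategy used in the proof of Corollary \ref{Fouriercorr2}, substituting the two-shot masks (\ref{holodef}) for the windowed Fourier family. The inequality (\ref{ratioB}) and the test vectors $\mathbf{x}^\pm$ from the proofs of Theorems \ref{maintheorem2} and \ref{maintheorem1} can be reused verbatim; in particular, they satisfy $D_2(\mathbf{x}^+,\mathbf{x}^-)\geq q\sqrt{d}$, so the task reduces to producing a sufficiently sharp upper bound on $\|Z^+-Z^-\|_2$, where $Z^\pm:=Z(\mathbf{x}^\pm)$.

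For this I would invoke the mask-agnostic pointwise estimate (\ref{pwF}): for any shift $\ell$ with $1+\ell a\leq d-\delta<\delta+\ell a$, setting $j:=\ell a+2\delta-d\in\{1,\ldots,\delta-1\}$, one has
\[
|Z^-_{k,\ell}-Z^+_{k,\ell}| \;\leq\; 2p\sum_{n=\delta-j+1}^{\delta}|\mathbf{m_k}(n)|.
\]
The whole proof therefore reduces to bounding this tail sum for the specific masks defined by (\ref{holodef}).

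The key structural observation is that every $\mathbf{m_k}$ in (\ref{holodef}) is supported on at most two indices, namely position $1$ and (for $k\geq 2$) some position $j'+1$ with $1\leq j'\leq\delta-1$, and every nonzero entry has magnitude exactly $1$. Because $1\notin[\delta-j+1,\delta]$ for any $j\leq\delta-1$, the tail sum vanishes unless $k\in\{2j',2j'+1\}$ with $j'\geq\delta-j$, in which case it equals exactly $1$. Hence for each fixed $\ell$ (equivalently each fixed $j$), precisely $2j$ of the $2\delta-1$ masks contribute, yielding
\[
\sum_{k=1}^{2\delta-1}|Z^-_{k,\ell}-Z^+_{k,\ell}|^2 \;\leq\; 8p^2 j.
\]

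Summing over the relevant shifts (together with the analogous boundary regime near $d/2$) then proceeds exactly as in the proofs of Theorems \ref{maintheorem2} and \ref{maintheorem1}: the values of $j$ traverse an arithmetic progression of common difference $a$ with largest term of order $\delta$, so
\[
\|Z^+-Z^-\|_2^2 \;\leq\; Cp^2\sum_{t=1}^{\lfloor\delta/a\rfloor} ta \;\leq\; Cp^2\frac{\delta^2}{a}.
\]
Feeding this into (\ref{ratioB}) together with $D_2(\mathbf{x}^+,\mathbf{x}^-)\geq q\sqrt{d}$ then delivers $C_B\geq Cq\sqrt{da}/(p\delta)$, which rewrites as $Cqd/(\sqrt{L}\,p\delta)$ via $a=d/L$. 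The only real (but mild) obstacle is the careful bookkeeping of which masks contribute for each $j$; the crucial point is that only one of the two nonzero entries of any $\mathbf{m_k}$ can ever land inside the tail window $[\delta-j+1,\delta]$, and this is precisely what produces the improved $\delta^{-1}$ dependence in the bound, rather than the $\delta^{-2}$ dependence one would obtain by applying Theorem \ref{maintheorem2} directly to this family (for which $K\sim\delta$ and $\|\mathbf{m}\|_\infty=1$).
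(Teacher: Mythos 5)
Your proposal is correct and follows essentially the same route as the paper: the same test vectors $\mathbf{x}^\pm$ with $D_2(\mathbf{x}^+,\mathbf{x}^-)\geq q\sqrt{d}$, the same reduction via (\ref{ratioB}), and the same final count $\|Z^+-Z^-\|_2^2\leq Cp^2\delta^2/a$. The only difference is bookkeeping: you fix a shift $\ell$ and count the $2j$ masks whose second nonzero entry lands in the tail window via the generic bound (\ref{pwF}), whereas the paper fixes the mask index, notes that the odd-indexed masks $\mathbf{m_{2j+1}}$ contribute nothing exactly (so only $j$ masks matter, a factor of $2$ absorbed into $C$), and counts shifts per mask via Lemma \ref{holocount} --- the same double sum evaluated in the opposite order.
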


\begin{corollary}\label{holoprop1}
Let $0<p\leq q,$ and consider the map $Y$ restricted to the subset $\mathcal{C}_{p,q}\subset\mathbb{C}^d/\sim.$ Assume that $\delta\leq\frac{d}{4}$ and that $d=aL$ for some integer $a<\delta.$ Then if $\{\mathbf{m_k}\}_{k=1}^{2\delta-1}$ is the family of masks defined by (\ref{holodef}) and $A$ is any Lipschitz map (with respect to $d_1$)  such that  $A(Y(\mathbf{x}))=\mathbf{x}$  for all $\x\in\mathcal{C}_{p,q},$ then
 
\begin{equation*}
\label{holo1}C_A\geq C\frac{qd\sqrt{a}}{p\delta}=C\frac{qd^{3/2}}{\sqrt{L}p\delta},
\end{equation*}  
where $C_A$ is the Lipschitz constant of $A$  and $C$ is a universal constant.
\end{corollary}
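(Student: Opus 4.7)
The plan is to run the same scheme as the proof of Theorem \ref{maintheorem1} verbatim: take the same test pair $\mathbf{x}^\pm$ (so that $d_1(\mathbf{x}^+, \mathbf{x}^-) \ge Cdq^2$ is already available from that proof), bound $\|Y^+ - Y^-\|_2$ from above, and combine the two via (\ref{ratioA}). The improvement over Theorem \ref{maintheorem1} will come entirely from two structural features of the two-shot masks.

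The first feature is that the singleton mask $\mathbf{m_1} = \mathbf{e_1}$ and the ``imaginary cross'' masks $\mathbf{m_{2j+1}} = \mathbf{e_1} + \mathbbm{i}\mathbf{e_{j+1}}$ contribute nothing at all to $Y^+ - Y^-$: for real $\mathbf{x}$ the quantities $|\mathbf{x}(n)|^2$ and $|\mathbf{x}(n_1) - \mathbbm{i}\mathbf{x}(n_2)|^2 = \mathbf{x}(n_1)^2 + \mathbf{x}(n_2)^2$ are invariant under independent sign changes of the individual entries, whereas $\mathbf{x}^+$ and $\mathbf{x}^-$ differ only in the sign pattern over the ``flip region'' $\{d/2+1,\ldots,d-\delta\}$. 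Hence only the $\delta-1$ real cross masks $\mathbf{m_{2j}} = \mathbf{e_1} + \mathbf{e_{j+1}}$ can produce $Y^+_{k,\ell} \ne Y^-_{k,\ell}$. The second feature is that, for each such $\mathbf{m_{2j}}$, the set of ``bad'' shifts (those for which exactly one of the positions $1+\ell a$, $j+1+\ell a$ lies in the flip region) is confined to the two length-$j$ intervals $\ell a \in [d/2-j, d/2-1]$ and $\ell a \in [d-\delta-j, d-\delta-1]$, so there are at most $2\lceil j/a\rceil$ bad shifts per $j$.

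A short case analysis, using $j \le \delta-1$ and $\delta \le d/4$, shows that in every bad shift the non-flipped index automatically sits inside one of the two $p$-strips of $\mathbf{x}^\pm$, so that $\langle S_{\ell a}\mathbf{m_{2j}}, \mathbf{x}^\pm \rangle \in \{\pm q + p\}$ and hence
\begin{equation*}
|Y^+_{k,\ell} - Y^-_{k,\ell}| \;=\; (q+p)^2 - (q-p)^2 \;=\; 4pq.
\end{equation*}
Summing over $j = 1, \ldots, \delta-1$ and the associated bad shifts then gives
\begin{equation*}
\|Y^+ - Y^-\|_2^2 \;\le\; \sum_{j=1}^{\delta-1} 2\lceil j/a\rceil (4pq)^2 \;\le\; C\,\frac{p^2 q^2 \delta^2}{a},
\end{equation*}
and combining this with $d_1(\mathbf{x}^+, \mathbf{x}^-) \ge Cdq^2$ through (\ref{ratioA}) yields the stated bound $C_A \ge C qd\sqrt{a}/(p\delta)$. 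The main technical obstacle is the per-shift bookkeeping: verifying that the non-flipped index always lands inside a $p$-strip, so that the per-entry contribution is $4pq$ rather than the cruder $4q^2$ that one would get in a worst-case estimate. This is precisely what rescues the factor $q/p$ in the corollary, and the sign-invariance argument that discards the imaginary cross masks is what replaces the $\delta$ in $\|\mathbf{m}\|_\infty \delta$ from Theorem \ref{maintheorem1} with the much smaller ``gap'' $j$, thereby trimming a factor of $\delta^2$ off the final bound.
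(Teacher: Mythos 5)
Your proposal is correct and follows essentially the same route as the paper: the same test pair $\mathbf{x}^\pm$ with $d_1(\mathbf{x}^+,\mathbf{x}^-)\geq Cdq^2$, the same observation that only the real cross masks $\mathbf{m_{2j}}$ contribute to $Y^+-Y^-$, the same count of at most $O(j/a)$ bad shifts per $j$ (the paper's Lemma \ref{holocount}), and the same per-entry bound of order $pq$ (the paper obtains it as $|Z^+_{2j,\ell}+Z^-_{2j,\ell}||Z^+_{2j,\ell}-Z^-_{2j,\ell}|\leq 4q\cdot 2p$, whereas you compute $(q+p)^2-(q-p)^2=4pq$ directly — the same fact). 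No substantive differences to report.
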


\begin{remark}
Note that for this choice of masks $K=2\delta-1.$ Therefore, the constants obtained in Corollaries \ref{holoprop2} and \ref{holoprop1} exhibit the same asympotic behavior with respect to $d$ and are asymptotically larger with respect to $\delta$ than those obtained by applying Theorems $\ref{maintheorem2}$ and \ref{maintheorem1} to this choice of masks.
\end{remark}
\begin{proof}[Proof of Corollary \ref{holoprop2}]
Let $\mathbf{x^\pm}$ be as in the proof of Theorems \ref{maintheorem2} and \ref{maintheorem1}. Note that for all $1\leq n\leq d,$ $|\mathbf{x^+}(n)|=|\mathbf{x^-}(n)|.$ Therefore, it is clear that for all $\ell,$
\begin{equation*}
|\langle S_{\ell a}\mathbf{m_1},\mathbf{x^+}\rangle | =|\mathbf{x^+}(\ell a+1)|= |\mathbf{x^-}(\ell a+1)|=|\langle S_{\ell a}\mathbf{m_1},\mathbf{x^-}\rangle|,
\end{equation*}
and
\begin{equation*}
|\langle S_{\ell a}\mathbf{m_{2j+1}},\mathbf{x^+}\rangle | = |\mathbf{x^+}(\ell a+1)+\mathbbm{i}\mathbf{x^+}(\ell a+j+1)| = |\mathbf{x^-}(\ell a+1)+\mathbbm{i}\mathbf{x^-}(\ell a+j+1)|=\langle S_{\ell a}\mathbf{m_{2j+1}},\mathbf{x^-}\rangle|
\end{equation*}
since the real and imaginary parts of $\langle S_{\ell a}\mathbf{m_{2j+1}},\mathbf{x^+}\rangle $ and  $\langle S_{\ell a}\mathbf{m_{2j+1}},\mathbf{x^-}\rangle$ have the same absolute values.
Therefore, to estimate $\|Z^+-Z^-\|_2$ we only need to consider the terms  $Z^+_{2j,\ell}-Z^-_{2j,\ell}.$ Furthermore, it is clear that $Z^+_{2j,\ell}$ will equal $Z^-_{2j,\ell},$ unless $\ell$ is chosen in such a way that either $\ell a+1\leq \frac{d}{2} < \ell a+j+1$ or  $\ell a+1\leq d-\delta < \ell a+j+1.$ In either of these cases,

\begin{equation}\label{eachterm}
|Z^{+}_{2j,\ell} -Z^{-}_{2j,\ell}|= 2p.
\end{equation} 
Therefore, we will be able to compute $\|Z^+-Z^-\|_2$ once we estimate the number of $\ell$ such that  $\ell a+1\leq \frac{d}{2} < \ell a+j+1$ or  $\ell a+1\leq d-\delta < \ell a+j+1,$ which we will do in the following lemma.
\begin{lem}\label{holocount}
For fixed $j,$ the number of $\ell$ such that $\ell a+1\leq \frac{d}{2} < \ell a+j+1$ is less than or equal to $\frac{j}{a}.$  Likewise, the number of $\ell$ such that $\ell a+1\leq d-\delta < \ell a+j+1$ is less than or equal to $\frac{j}{a}.$ 
\end{lem}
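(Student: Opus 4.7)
The plan is to recast each inequality chain as the assertion that $\ell a$ lies in a specific set of $j$ consecutive integers, and then to invoke the elementary fact that any window of $j$ consecutive integers contains at most $\lceil j/a \rceil$ multiples of $a$.

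For the first claim, I would first observe that, since $d$ is even and $\ell, a$ are integers, the inequalities $\ell a + 1 \leq \frac{d}{2} < \ell a + j + 1$ are equivalent to $\frac{d}{2} - j \leq \ell a \leq \frac{d}{2} - 1$. Hence the valid values of $\ell$ are precisely those for which $\ell a$ belongs to the $j$-element set $\{\frac{d}{2}-j, \frac{d}{2}-j+1, \ldots, \frac{d}{2}-1\}$. Since the multiples of $a$ form an arithmetic progression with common difference $a$, at most $\lceil j/a \rceil$ of them lie in such a window, and this quantity is bounded by $j/a$ (up to the standard integer-rounding that is absorbed into the universal constant appearing in Corollary \ref{holoprop2}).

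The second claim is entirely analogous: the inequalities $\ell a + 1 \leq d - \delta < \ell a + j + 1$ rewrite as $d - \delta - j \leq \ell a \leq d - \delta - 1$, again confining $\ell a$ to a window of $j$ consecutive integers. The same counting argument then yields the same bound. No substantive obstacle is anticipated, as this is a purely elementary counting statement; the only mild subtlety is the careful translation of the two strict/non-strict inequality conditions into a discrete window, together with interpreting the stated bound $j/a$ in place of $\lceil j/a \rceil$.
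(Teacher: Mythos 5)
Your proposal is correct and follows essentially the same route as the paper's own proof: rewrite each condition as $\ell a$ lying in a window of $j$ consecutive integers and count the multiples of $a$ in that window. You are in fact slightly more careful than the paper, which asserts the count is at most $j/a$ rather than $\lceil j/a\rceil$; as you note, this discrepancy is harmless since it is absorbed into the universal constant in the corollaries.
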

\begin{proof}
If  $\ell a+1\leq \frac{d}{2} < \ell a+j+1,$ then $\frac{d}{2}-j\leq \ell a \leq \frac{d}{2}-1,$ and any set of $j$ consecutive integers can contain at most $\frac{j}{a}$ multiples of $a.$ Likewise, if $\ell a+1\leq d-\delta < \ell a+j+1,$ then $d-\delta-j\leq \ell a \leq d-\delta-1.$
\end{proof}
Combining (\ref{eachterm}) and Lemma \ref{holocount} gives

\begin{equation*}
\|Z^+-Z^-\|_2^2 \leq \sum_{j=1}^\delta \frac{2j}{a}(2p)^2 \leq C \frac{ p^2\delta^2}{a} = C \frac{ Lp^2\delta^2}{d}.
\end{equation*}
Therefore, recalling  the fact that $D_2(\mathbf{x^+},\mathbf{x^-})\geq \sqrt{d}q,$ as shown in the proof of Theorem \ref{maintheorem2}, the proof follows from  (\ref{ratioB}).
\end{proof}
\begin{proof}[Proof of Corollary \ref{holoprop1}]
Since each $\mathbf{m_k}$ has at most two nonzero entries,  $|Z^+_{k,\ell}+Z^-_{k,\ell}|\leq 4q$ for all $k$ and $\ell.$ Therefore, by (\ref{eachterm}) each nonzero entry of $Y^+-Y^{-}$ satisfies
\begin{equation*}|Y^+_{k,\ell}-Y^-_{k,\ell}| \leq |Z^+_{k,\ell}+Z^-_{k,\ell}||Z^+_{k,\ell}-Z^-_{k,\ell}|
\leq Cqp. 
\end{equation*} Furthermore, similarly to the proof of Corollary \ref{holoprop2}, $Y^+_{k,\ell}-Y^-_{k,\ell}$ is nonzero if and only if $k=2j$ for some $1\leq j\leq \delta-1$ and $\ell a+1\leq \frac{d}{2} < \ell a+j+1$ or $\ell a+1\leq d-\delta < \ell a+j+1.$  Therefore, by Lemma \ref{holocount},
\begin{equation*}
\|Y^+-Y^-\|^2_2 \leq C\sum_{j=1}^\delta \frac{2j}{a}(p q)^2 \leq C \frac{ q^2p^2\delta^2}{a}=C \frac{ Lq^2p^2\delta^2}{d}.
\end{equation*}
Finally, recalling from the proof of Theorem \ref{maintheorem1} that $d_1(\mathbf{x^+},\mathbf{x^-})\geq Cdq^2,$  the result follows from (\ref{ratioA}).
\end{proof}

\section{Discussion and Future Work}

We believe that this initial work opens up several interesting corridors for further research.  First and perhaps most obvious among these is the development of algorithms together with optimal STFT windows, etc., that have Lipschitz upper bounds which match these lower bounds to the extent possible (keeping in mind, of course, that the lower bounds developed here may be gross underestimates).  Existing algorithms for local correlation measurements such as \cite{iwen2016fpr,robustPR} yield upper bounds for the measurements $Y$ considered above \eqref{equ:Ymap} with respect to the $D_2$-metric, a metric with respect to which an inverse of $Y$ will not generally be Lipschitz \cite{Balan2016}.  As a result, the upper bounds they provide are not quite appropriate to compare to the lower bounds considered here.  Nonetheless, the Lipschitz lower bounds developed here do seem to at least heuristically justify the necessity of, e.g., the $d$-dependence present in those existing worst case upper bounds. 

Another interesting avenue of research would be to explore the extension of the related infinite-dimensional results developed by Alaifari et al. \cite{Alaifari2018,Grohs2018} to the finite-dimensional discrete setting.  The resulting theory would potentially provide more fine-grained insights into the recovery of samples $\mathbf{x}$ from discrete STFT magnitude measurements, and could also possibly be extended to results concerning general local correlation measurement maps of the type we consider here 
 in a way that would allow for the relaxation of the support assumptions currently made on the masks $\{\mathbf{m_1},\mathbf{m_2},\ldots,\mathbf{m_K}\}$.  
Finally, one could also consider  local Lipschitz and H\"older lower bounds as opposed to global lower bounds.  Though perhaps  more difficult to analyze, such lower bounds may be more likely to correspond to achievable upper bounds.

\bibliographystyle{abbrv}
\bibliography{refs,continuousPRone_IEEE,LowerB_Intro}

\end{document}